\newtheorem{thm}{Theorem}[section]
\newcommand{\R}{{\rm I}\kern-0.18em{\rm R}}
\newcommand{\1}{{\rm 1}\kern-0.25em{\rm I}}
\newcommand{\E}{{\rm I}\kern-0.18em{\rm E}}
\newcommand{\p}{{\rm I}\kern-0.18em{\rm P}}
\title{Modified Stieltjes Transform and Generalized Convolutions}
\author{Lev B Klebanov\footnote{Department of Probability and Mathematical Statistics, MFF, Charles University, Prague, Czech Republic}, Rasool Roozegar\footnote{Department of Statistics, Yazd University, P.O. Box 89195-741, Yazd, Iran}}
\date{}
\begin{document}
	
\maketitle

\begin{abstract}
Classical Stieltjes Transform is modified in a way to generalize both Stieltjes and Fourier transforms. This transform allows to introduce new classes of commutative and non-commutative generalized convolutions.

\noindent
{\bf Key words:} Stieltjes Transform; characteristic function; generalized convolution.
\end{abstract}	

\section{Introduction}\label{s1}
\setcounter{equation}{0} 

Let us begin with a definitions of classical and generalized Stieltjes transforms. Although, usual these are transforms given on a set of functions, we will consider more convenient for us case of probability measures or for cumulative distribution functions. Namely, let $\mu$ be a probability measure of Borel subsets of real line $\R^1$. Its Stieltjes transform is defined as
\[S(z)=S(z;\mu) = \int_{-\infty}^{\infty}\frac{d \mu(x)}{x-z}, \]
where ${\tt Im} (z) \neq 0$. Surely, the integral converges in this case. Generalized Stieltjes transform is represented by
\[ S_{\gamma}(z)=S_{\gamma}(z;\mu)= \int_{-\infty}^{\infty}\frac{d \mu(x)}{(x-z)^\gamma}\]
for real $\gamma >0$. A modification of generalized Stieltjes transform was proposed in \cite{RR}. Now we prefer to change this modification, and define the following form of transform:
\begin{equation}\label{eq1}
R_{\gamma}(u) = R_{\gamma}(u;\mu) =\int_{-\infty}^{\infty}\frac{d \mu(x)}{(1-iux)^\gamma}.
\end{equation}
Connection to the generalized Stieltjes transform is obvious. It is convenient for us to use this transform for real values of $u$. It is clear that the limit
\begin{equation}\label{eq2}
\lim_{\gamma \to \infty}R_{\gamma}(u/\gamma) = \int_{-\infty}^{\infty} \exp\{iux\}d \mu(x),
\end{equation}
represents Fourier transform (characteristic function) of the measure $\mu$. The uniqueness of a measure recovering from its modified Stieltjes transform follows from corresponding result for generalized Stieltjes transform. 

Relation (\ref{eq2}) gives us limit behavior of modified Stieltjes transform as $\gamma \to infty$. Another possibility ($\gamma \to 0$) without any normalization gives trivial limit equals to 1. However, more proper approach is to calculate the limit $(R_{\gamma}(u)-1)/\gamma$ as $\gamma \to 0$. It is easy to see, that
\begin{equation}\label{eq2a}
\lim_{\gamma \to 0}(R{\gamma}(u)-1)/\gamma = \int_{-\infty}^{\infty}\log \frac{1}{1-iux} d\mu(x).
\end{equation}

If the measure $\mu$ has compact support, it is possible to write series expansion for modified Stieltjes transform: 
\[R_{\gamma}(u)=\int_{-\infty}^{\infty} \frac{d \mu(x)}{(1-i u x)^{\gamma}}=\sum_{k=0}^{\infty}(-1)^k i^k \binom{-\gamma}{k} \kappa_k(\mu) x^k ,\]
where $\kappa_k(\mu) = \int_{-\infty}^{\infty}x^k d\mu (x)$ is $k$th moment of the measure $\mu$. 

Modified Stieltjes transform may be interpreted in terms of characteristic functions. Namely, let us consider gamma distribution with probability density function
\begin{equation}\label{eqG1}
p(x)= \frac{1}{|\lambda|^{\gamma}\Gamma (\gamma)}x^{\gamma -1}\exp(-x/\lambda),
\end{equation}
for $x*\lambda >0$, and zero in other cases. Note, that this distribution is ordinary gamma distribution for positive $\lambda$, and its "mirror reflection" on negative semi-axes for negative $\lambda$. Let us now consider $\lambda$ as random variable with cumulative distribution function $\mu$. In this case, (\ref{eq1}) gives characteristic function of gamma distribution with such random parameter:
\begin{equation}\label{eqG2}
f(t)=\int_{-\infty}^{\infty}\frac{d \mu (\lambda)}{(1-it\lambda)^{\gamma}}.
\end{equation}

\section{A family of commutative generalized convolutions}\label{s2}
\setcounter{equation}{0} 

Using modifies Stieltjes transform we can introduce a family of commutative generalized convolutions. Main idea for this is the following. Let $\mu_1$ and $\mu_2$ be two probabilities. Take positive $\gamma$ and consider product of modified Stieltjes transforms of these measures $R_{\gamma}(u,\mu_1)R_{\gamma}(u,\mu_2)$. We would like to represent this product as a modified Stieltjes transform of a measure. Typically, the product is not modified Stieltjes transform with the same index $\gamma$. However, it can be represented as modified Stieltjes transform with index $\rho>\gamma$ of a measure $\nu$, which is called  generalized (more precisely $"(\gamma,\rho)"$) convolution of the measures $\mu_1$ and $\mu_2$. Let us mention that the indexes $\rho$ and $\gamma$ are not arbitrary, however, there are infinitely many suitable pairs of indexes. Clearly, the measure $\nu$, if exists, depends on $\mu_1$, $\mu_2$, and on indexes $\gamma$, $\rho$. 

Unfortunately, we cannot describe all pairs $\gamma,\rho$ for which corresponding generalized convolution $\nu$ of measures $\mu_1$ and $\mu_2$ exists. However, we shall show, the pairs of the form $n,2n$ (where $n$ is positive, but not necessarily integer number) possess this property.

\begin{thm}\label{th1}
Let $\mu_1$, $\mu_2$ be two probability measures on $\sigma$-field Borel subsets of real line. For arbitrary real $n>0$ there exists $"(n,2n)"$ convolution $\nu$ of $\mu_1$ and $\mu_2$. In other words, for real $n>0$ and measures $\mu_1$ and $\mu_2$ there exists a measure $\nu$ such that
\begin{equation}\label{eq3}
R_{2n}(u;\nu)=R_{n}(u;\mu_1) R_{n}(u;\mu_2).
\end{equation}
\end{thm}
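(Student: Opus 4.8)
The plan is to read $R_\gamma$ probabilistically, exactly as in \eqref{eqG1}--\eqref{eqG2}. Let $Y_c$ denote a random variable with the standard gamma law of shape $c>0$ and unit scale. Then for a random variable $\lambda$ with law $\mu$, independent of $Y_\gamma$, one has $\E\bigl[e^{iu\lambda Y_\gamma}\mid\lambda\bigr]=(1-iu\lambda)^{-\gamma}$, where the power is the principal branch; this is unambiguous because $\operatorname{Re}(1-iu\lambda)=1>0$, so no branch cut is crossed. Integrating in $\lambda$ and applying Fubini (legitimate since $|(1-iux)^{-\gamma}|=(1+u^2x^2)^{-\gamma/2}\le 1$) shows that $R_\gamma(u;\mu)$ is precisely the characteristic function of the product $\lambda\,Y_\gamma$. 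Consequently the right-hand side of \eqref{eq3} is the characteristic function of $\lambda_1 Y_n^{(1)}+\lambda_2 Y_n^{(2)}$, where $\lambda_i\sim\mu_i$, the $Y_n^{(i)}$ are standard gamma of shape $n$, and all four variables are independent.

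Next I would invoke the classical beta--gamma algebra. For independent $Y_n^{(1)},Y_n^{(2)}$ of shape $n$, put $S:=Y_n^{(1)}+Y_n^{(2)}$ and $B:=Y_n^{(1)}/S$. Then $S$ has the standard gamma law of shape $2n$, $B$ has the $\mathrm{Beta}(n,n)$ law, and $S$ and $B$ are independent; hence $Y_n^{(1)}=BS$ and $Y_n^{(2)}=(1-B)S$. Substituting,
\[
\lambda_1 Y_n^{(1)}+\lambda_2 Y_n^{(2)}=\bigl(\lambda_1 B+\lambda_2(1-B)\bigr)\,S .
\]
Since $(S,B)$ is independent of $(\lambda_1,\lambda_2)$ and $S$ is independent of $B$, a routine $\pi$-system argument gives that the factor $\Lambda:=\lambda_1 B+\lambda_2(1-B)$ is independent of $S$.

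Finally I would \emph{define} $\nu$ to be the law of $\Lambda=\lambda_1 B+\lambda_2(1-B)$; this is a genuine probability measure on $\R^1$, being the image of $\mu_1\otimes\mu_2\otimes\mathrm{Beta}(n,n)$ under a Borel map. Applying the first step with index $2n$, the characteristic function of $\Lambda S$, with $S$ standard gamma of shape $2n$ independent of $\Lambda$, equals $R_{2n}(u;\nu)$. But $\Lambda S$ has the same law as $\lambda_1 Y_n^{(1)}+\lambda_2 Y_n^{(2)}$, whose characteristic function is $R_n(u;\mu_1)R_n(u;\mu_2)$, so \eqref{eq3} follows. Commutativity of the convolution is transparent from this construction: $(\mu_1,\lambda_1)$ and $(\mu_2,\lambda_2)$ enter symmetrically once one uses $B\stackrel{d}{=}1-B$ for $\mathrm{Beta}(n,n)$.

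I expect the only delicate point to be the first paragraph: fixing the principal branch of $(1-iux)^{-\gamma}$ and justifying the Fubini interchange so that $R_\gamma$ really is the characteristic function of a product of a $\mu$-distributed variable with an independent gamma variable. Once that representation is secured, the beta--gamma algebra does all the work and the remainder is bookkeeping. (If one prefers to avoid probabilistic language, the same proof can be phrased as the analytic identity $\E_B\bigl[(B a+(1-B)b)^{-2n}\bigr]=(ab)^{-n}$ for $\operatorname{Re}a,\operatorname{Re}b>0$ and $B\sim\mathrm{Beta}(n,n)$, applied with $a=1-iux$, $b=1-iuy$, followed by integration against $\mu_1\otimes\mu_2$; but the probabilistic route is cleaner.)
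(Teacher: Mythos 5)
Your proof is correct, and it reaches the theorem by a genuinely different route than the paper. The paper first reduces to the case where $\mu_1=\delta_a$, $\mu_2=\delta_b$ (via convex combinations and weak limits), then computes the moments $\kappa_m(\nu)$ forced by \eqref{eq3a}, recognizes them as $b^m (n)_m\,{}_2F_1(-m,n,1-m-n,a/b)/(2n)_m$, and matches this with $\E\bigl(aX+b(1-X)\bigr)^m$ for $X\sim\mathrm{Beta}(n,n)$ via a hypergeometric transformation; the general case is then an approximation argument. You instead use the representation of $R_\gamma(u;\mu)$ as the characteristic function of $\lambda Y_\gamma$ (a gamma variable with random scale, which is exactly the interpretation the paper offers in \eqref{eqG1}--\eqref{eqG2} but does not exploit in the proof) together with the beta--gamma algebra $Y_n^{(1)}=BS$, $Y_n^{(2)}=(1-B)S$ with $S\sim\Gamma(2n)$ independent of $B\sim\mathrm{Beta}(n,n)$. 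Both arguments produce the same measure $\nu$, namely the law of $\lambda_1 B+\lambda_2(1-B)$ --- the paper states this representation $W=UX+V(1-X)$ only after the theorem. What your route buys: it handles arbitrary $\mu_1,\mu_2$ in one stroke, with no reduction to Dirac measures, no appeal to moment determinacy, and no hypergeometric identities; the only analytic input is the branch/Fubini check you flag, which is routine since $\operatorname{Re}(1-iux)=1$. What the paper's route buys: the explicit moment formula \eqref{eq5} and the ${}_2F_1$ structure of the convolution, which is used later (e.g.\ in Theorem \ref{th3}). Your closing analytic reformulation, the symmetric Feynman-type identity $\E_B\bigl[(Ba+(1-B)b)^{-2n}\bigr]=(ab)^{-n}$ for $\operatorname{Re}a,\operatorname{Re}b>0$, is also correct and gives a purely computational variant of the same argument.
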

\begin{proof}
Because convex combination of probability measures is a probability measure again, and each probability on real line can be considered as a limit of sequence of measures concentrated in finite number of points each, it is sufficient to prove the statement for Dirac $\delta$-measures only.

Suppose now that the measures $\mu_1$ and $\mu_2$ are concentrated in points $a$ and $b$ correspondingly. We have to prove that there is a measure $\nu$ depending on $a$, $b$ and $n$ such that
\begin{equation}\label{eq3a}
\int_{-\infty}^{\infty}\frac{d \nu(x)}{(1-iux)^{2n}}=\frac{1}{(1-iua)^{n}}\cdot\frac{1}{(1-iub)^{n}}.
\end{equation}
Of course, it is enough to find the measure $\nu$ with compact support. Therefore, we must have
\[\kappa_m =\]
\begin{equation}\label{eq4}
\sum_{k=0}^{m}\frac{n(n+1)\cdots (n +k-1)}{k!}\cdot \frac{n (n+1)\cdots (n+m-k-1)}{(m-k)!} a^k b^{m-k}/\binom{-2n}{m},
\end{equation}
where $\kappa_m = \kappa_m(\nu)$ is $m$th moment of $\nu$. It remains to show that the left hand side of (\ref{eq4}) really defines for $m=0,1, \ldots$ moments of a distribution.

Let us denote $\lambda =a/b$ and suppose that $|\lambda|<1$ (the case  $|\lambda|=1$ may be obtained as a limit case). Then $\kappa_m$ can be rewrite in the form
\[ \kappa_m = (-1)^m b^m \sum_{k=0}^{m}\binom{m}{k}\frac{(n)_k (n)_{m-k}}{(2n)_m}\lambda^k, \]
where $(s)_j=s\cdots (s+j-1)$ is Pochhammer symbol. Simple calculations allows us to obtain from previous equality that
\begin{equation}\label{eq5}
\kappa_m=\frac{b^m (n)_{m}\; {}_2F_1 (-m,n,1-m-n,a/b)}{(2n)_m}.
\end{equation}

Let us consider a random variable $X$ having Beta distribution with equal parameters $n$ and $n$, that is with probability density function
\[ p_X(x)=(1-x)^{n-1}x^{n-1}2^{2n-1}\Gamma(n+1/2)/(\sqrt{\pi}\, \Gamma(n)),\]
for $x \in (0,1)$, and zero for $x \notin (0,1)$. It is not difficul to calculate that 
\[\E \Bigl( aX+b(1-X)\Bigr)^m = b^m \; {}_2F_1 (-m,n,2n,1-a/b), \]
which coincide with (\ref{eq5}) for non-negative integer $m$ and real $n>0$.
\end{proof}

Theorem \ref{th1} allows us to define a family of depending on $n$ generalized convolutions $\nu=\mu_1\star_{n} \mu_2$, which is equivalent to the relation (\ref{eq3}). Obviously, this operation is commutative. However, it is not associative, which can be easily verified by comparing the convolutions $(\delta_1\star_{n}\delta_2)\star_{n}\delta_3$ and $\delta_1\star_{n}(\delta_2\star_{n}\delta_3)$, where $\delta_a$ denotes Dirac measure at point $a$. It is easy to verify that $\mu_1\star_{n} \mu_2(2 A) \stackrel[n \to \infty]{}{\longrightarrow} \mu_1*\mu_2(A)$, where $*$ denotes ordinary convolution of measures. We have $2 A$ in the left-hand-side because $\E X=1/2$. This generalized convolution may be written through independent random variables $U$ and $V$ in the form
\[ W=U X+V (1-X),\]
where $X$ is random variable independent of $(U,V)$ and having Beta distribution with parameters $(n,n)$, and the distribution of $W$ is exactly generalized convolution of distributions of $U$ and $V$.

In view of non-associativity of $\star_{n}$-convolution it does not coincide with K. Urbanik generalized convolution (see, \cite{Urb}). At the same time, it non-associativity shows that the expression $\mu_1\star_{n}\mu_2\star_{n}\mu_3$ has no sense. However, one can define this 3-arguments operation by use stochastic linear combinations, that is linear forms of random variables with random coefficients. Now we define such $k$-arguments operation. Namely, let $U_1, \ldots , U_k$ be independent random variables, and $X_1, \ldots , X_{n-1}$ be a random vector having Dirichlet distribution with parameters $(a_1, \ldots , a_k)=(n, \ldots ,n)$. Define
\begin{equation}\label{eq6}
W= X_1 U_1 + \ldots +X_{k-1}U_{k-1}+\bigl(1-\sum_{j=1}^{k-1}\bigr)\,U_k
\end{equation}
The map from vector $U$ of marginal distributions of $(U_1, \ldots ,U_k)$ to the distribution of random variable $W$ call $k$-tuple generalized convolution of the components of $U$. Clearly, this operation is symmetric with respect to permutations of coordinates of the vector $U$.

\section{Connected family of non-commutative generalized convolutions}\label{s3}
\setcounter{equation}{0} 

Let now $U_1, \ldots , U_k$ be independent random variables, and $X_1, \ldots , X_{n-1}$ be a random vector having Dirichlet distribution with parameters $(a_1, \ldots , a_k)$, possible different from each other. Using the relation (\ref{eq6}) define random variable $W$. Its distribution will be called non-commutative generalized convolution of marginal distributions of the vector $U$. In particular case of $k=2$ we obtain non-commutative variant of two-tuple generalized convolution, which represents more general case of (\ref{eq1}).

Let us give a property of this generalized convolution.
To do so, let us define $\tilde{b}eta_{A,B}$ distribution over interval $(A,B)$ by its probability density function
\[p_{\alpha,\beta}(x) =\begin{cases}
\frac{1}{B(\alpha,\beta)(B-A)^{\alpha + \beta-1}}(x-A)^{\alpha-1}(B-x)^{\beta -1}, & \text{if} \; A<x<B,\\ 0 & \text{otherwise},
\end{cases} \]
for positive $\alpha, \; \beta$. Here $B(\alpha , \beta)$ is beta function.

\begin{thm}\label{th2}
Let $W_1,W_2$ be two independent identical distributed random variables having $\tilde{b}eta_{A,B}(n,n)$ distribution, and $\mu_1,\mu_2$ be corresponding probability distributions. Then the measure $\nu=\mu_1 \star_n \mu_2$ corresponds to $\tilde{b}eta_{A,B}(2n,2n)$ distribution.
\end{thm}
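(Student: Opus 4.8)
The plan is to reduce to the standard interval by affine equivariance, then to use the stochastic representation of $\star_n$ recorded after Theorem~\ref{th1} and to verify the claim through moments, exactly as in the proof of that theorem. For the reduction, note that both sides transform in the same way under $x\mapsto A+(B-A)x$: if $Z_1,Z_2$ are independent with $Z_i\sim\tilde\beta_{0,1}(n,n)=\mathrm{Beta}(n,n)$ and $X\sim\mathrm{Beta}(n,n)$ is independent of $(Z_1,Z_2)$, then $W_i:=A+(B-A)Z_i\sim\tilde\beta_{A,B}(n,n)$ and, since $X+(1-X)\equiv1$,
\[
W_1X+W_2(1-X)=A+(B-A)\bigl(Z_1X+Z_2(1-X)\bigr).
\]
Because $\tilde\beta_{A,B}(2n,2n)$ is the image of $\mathrm{Beta}(2n,2n)$ under the same affine map, it suffices to treat $A=0$, $B=1$, i.e.\ to show $Z_1X+Z_2(1-X)\sim\mathrm{Beta}(2n,2n)$ whenever $Z_1,Z_2,X$ are i.i.d.\ $\mathrm{Beta}(n,n)$.

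The law of $W:=Z_1X+Z_2(1-X)$ has compact support, hence is determined by its moments (this was already used in Theorem~\ref{th1}). By independence,
\[
\E[W^m]=\sum_{k=0}^m\binom{m}{k}\,\E[Z_1^k]\,\E[Z_2^{m-k}]\,\E\bigl[X^k(1-X)^{m-k}\bigr],
\]
and, in the Pochhammer notation of the paper, $\E[Z_1^k]=(n)_k/(2n)_k$, $\E[Z_2^{m-k}]=(n)_{m-k}/(2n)_{m-k}$, $\E[X^k(1-X)^{m-k}]=(n)_k(n)_{m-k}/(2n)_m$. Since the $m$-th moment of $\mathrm{Beta}(2n,2n)$ is $(2n)_m/(4n)_m$, the statement is equivalent to the terminating identity
\[
\sum_{k=0}^m\binom{m}{k}\,\frac{(n)_k^2\,(n)_{m-k}^2}{(2n)_k\,(2n)_{m-k}}=\frac{(2n)_m^2}{(4n)_m}\qquad(m=0,1,2,\dots).
\]
Reversing the factors with $(a)_{m-k}=(-1)^k(a)_m/(1-a-m)_k$ rewrites the left side as a well-poised terminating ${}_4F_3$ at argument $1$, i.e.\ a multiple of ${}_4F_3(-m,n,n,1-2n-m;\,2n,1-n-m,1-n-m;\,1)$; I would evaluate this either by matching it to one of the classical well-poised or Saalsch\"utzian summations, or, more robustly, by induction on $m$ using contiguous relations for the ratios $(n)_k/(2n)_k$.

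The one real obstacle is this hypergeometric evaluation. In Theorem~\ref{th1} the relevant series was an ordinary ${}_2F_1$ that could be read off from the moments of $aX+b(1-X)$; here the summand carries \emph{squared} Pochhammer ratios, so the series is genuinely of Clausen type (${}_3F_2$ or ${}_4F_3$), and producing its closed form is the technical heart of the argument — the affine reduction and the moment-problem step are routine. A cleaner route worth trying first is the transform side: by the Euler integral, $R_\gamma(u;\tilde\beta_{0,1}(\alpha,\beta))={}_2F_1(\gamma,\alpha,\alpha+\beta,iu)$, so for these measures the defining relation (\ref{eq3}) becomes ${}_2F_1(2n,2n,4n,iu)=\bigl({}_2F_1(n,n,2n,iu)\bigr)^2$, a quadratic (Clausen-type) relation for the Gauss function, which one would hope to derive from a known quadratic transformation.
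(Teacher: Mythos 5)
Your affine reduction and the moment computation are correct, and you have located the crux precisely: everything hinges on the terminating identity
\[
\sum_{k=0}^m\binom{m}{k}\,\frac{(n)_k^2\,(n)_{m-k}^2}{(2n)_k\,(2n)_{m-k}}=\frac{(2n)_m^2}{(4n)_m}.
\]
You leave that identity unproven, and this is a genuine gap that cannot be closed: the identity is false. Take $n=1/2$, $m=2$. Then $(1/2)_2=3/4$ and $(1)_2=2$, so the left side is $\frac{9}{32}+\frac{4}{32}+\frac{9}{32}=\frac{11}{16}$, while the right side is $\frac{4}{6}=\frac{2}{3}$. Equivalently, for $Z_1,Z_2,X$ i.i.d.\ arcsine (${\rm Beta}(1/2,1/2)$) one finds $\E[(Z_1X+Z_2(1-X))^2]=\frac{11}{32}$, whereas ${\rm Beta}(1,1)$ has second moment $\frac13$; on the transform side, $[{}_2F_1(1/2,1/2;1;z)]^2=1+\frac{z}{2}+\frac{11}{32}z^2+\cdots$ while ${}_2F_1(1,1;2;z)=1+\frac{z}{2}+\frac{z^2}{3}+\cdots$. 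So the obstacle you flagged is not merely technical: Theorem \ref{th2} as stated is incorrect, and the paper's own one-line proof (``the rest is just simple calculation'') glosses over exactly this step.

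Your instinct that the transform-side relation should come from a Clausen-type quadratic identity is the right diagnostic, and it shows what the correct statement must be. Clausen's formula $[{}_2F_1(a,b;a+b+\tfrac12;z)]^2={}_3F_2(2a,2b,a+b;2a+2b,a+b+\tfrac12;z)$ collapses to a ${}_2F_1$ only when an upper parameter cancels the lower parameter $a+b+\tfrac12$, i.e.\ when $b=a+\tfrac12$. Taking $a=n$, $b=n+\tfrac12$ gives $[{}_2F_1(n,n+\tfrac12;2n+1;z)]^2={}_2F_1(2n,2n+\tfrac12;4n+1;z)$, which via the Euler integral says that the $\star_n$-convolution of two $\tilde{b}eta_{A,B}(n+\tfrac12,n+\tfrac12)$ laws is $\tilde{b}eta_{A,B}(2n+\tfrac12,2n+\tfrac12)$ --- precisely the $k=2$, $r_1=r_2=n$ case of Theorem \ref{th3} (note the mixing variable there is still ${\rm Beta}(n,n)$, but the convolved laws carry parameters shifted by $\tfrac12$). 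That corrected version is what your method, in either its moment form or its hypergeometric form, actually proves; for the parameters $(n,n)$ of the statement there is no such evaluation to be found, because the required identity fails already at $m=2$.
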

\begin{proof}
From the proof of Theorem \ref{th1} that $W_j \stackrel{d}{=}A X_j+B(1-X_j)$, where $X_1,X_2$ are independent identically distributed random variables having Beta(n,n) distribution. The rest of the proof is just simple calculation.
\end{proof}
The property given by Theorem \ref{th2} is very similar to classical stability definition. 

\begin{thm}\label{th3}
Let $U_j$, $j=1, \ldots ,k$ be independent random variables having $\tilde{b}eta$ distribution with parameters $\alpha_j=r_j+1/2$, $\beta_j=r_j+1/2$. Let $X_1, \ldots , X_{k-1}$ be a random vector having Dirichlet distribution with parameters $(r_1, \ldots , r_k)$. Then random variable
\[ W= X_1 U_1 + \ldots +X_{k-1}U_{k-1}+\bigl(1-\sum_{j=1}^{k-1}X_{j}\bigr)\,U_k \]
has $\tilde{b}eta$ distribution with parameters $\Bigl(\sum_{j=1}^{k}r_j+1/2, \;\sum_{j=1}^{k}r_j+1/2\Bigr)$.
\end{thm}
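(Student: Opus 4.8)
The plan is to reduce the statement to the uniqueness of measure recovery from the modified Stieltjes transform recalled in Section~\ref{s1}. Each $U_{j}$ is supported on a fixed bounded interval, which we may take to be $(0,1)$; the case of a general common interval $(A,B)$ follows because $\sum_{j}X_{j}=1$, so the whole construction commutes with affine changes of variable. Hence $W$ is bounded and its law is determined by $R_{\rho}(\,\cdot\,;\mathcal L(W))$ for any single $\rho>0$. I would take $\rho=R:=r_{1}+\cdots+r_{k}$ and show that $R_{R}(\,\cdot\,;\mathcal L(W))$ coincides with $R_{R}\bigl(\,\cdot\,;\tilde{b}eta(R+\tfrac12,R+\tfrac12)\bigr)$; the conclusion then follows immediately.

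The first ingredient is a product formula for the modified Stieltjes transform under the $k$-fold non-commutative convolution of Section~\ref{s3}:
\[ R_{r_{1}+\cdots+r_{k}}\bigl(u;\mathcal L(W)\bigr)=\prod_{j=1}^{k}R_{r_{j}}\bigl(u;\mathcal L(U_{j})\bigr), \]
understood as an identity of analytic functions of $u$ near $0$. This is proved by the same moment computation as Theorem~\ref{th1}: one expands $\E W^{m}=\sum_{|\alpha|=m}\binom{m}{\alpha}\E\bigl[\prod_{j}X_{j}^{\alpha_{j}}\bigr]\prod_{j}\E\bigl[U_{j}^{\alpha_{j}}\bigr]$, inserts the Dirichlet moment identity $\E\bigl[\prod_{j}X_{j}^{\alpha_{j}}\bigr]=\prod_{j}(r_{j})_{\alpha_{j}}\big/(R)_{m}$, and compares with the power-series expansion of $R_{\gamma}$ for compactly supported measures given in Section~\ref{s1}; the sum over compositions $\alpha$ factors exactly into the product of the $k$ one-variable series.

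The second ingredient is the evaluation, for $s>0$, of $R_{s}$ on a symmetric Beta law with the prescribed parameters. By Euler's integral representation of ${}_2F_1$,
\[ R_{s}\bigl(u;\tilde{b}eta(s+\tfrac12,s+\tfrac12)\bigr)={}_2F_1\!\bigl(s,\,s+\tfrac12;\,2s+1;\,iu\bigr)=\Bigl(\frac{2}{1+\sqrt{1-iu}}\Bigr)^{2s}, \]
the last equality being the classical quadratic transformation ${}_2F_1(a,a+\tfrac12;2a+1;z)=\bigl(\tfrac12(1+\sqrt{1-z})\bigr)^{-2a}$, valid for small $|z|$ and extended by analyticity since $\operatorname{Re}\sqrt{1-iu}>0$ for real $u$. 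Combining the two ingredients with $\mathcal L(U_{j})=\tilde{b}eta(r_{j}+\tfrac12,r_{j}+\tfrac12)$ gives
\[ R_{R}\bigl(u;\mathcal L(W)\bigr)=\prod_{j=1}^{k}\Bigl(\frac{2}{1+\sqrt{1-iu}}\Bigr)^{2r_{j}}=\Bigl(\frac{2}{1+\sqrt{1-iu}}\Bigr)^{2R}=R_{R}\bigl(u;\tilde{b}eta(R+\tfrac12,R+\tfrac12)\bigr), \]
and the uniqueness of recovery finishes the proof.

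The step I expect to be the real obstacle is the hypergeometric identity in the second ingredient. It is exactly the closed form of ${}_2F_1(s,s+\tfrac12;2s+1;z)$ as a pure power $\bigl(\tfrac12(1+\sqrt{1-z})\bigr)^{-2s}$ that makes the product over $j$ collapse, and this is precisely where the special parameter choice $\alpha_{j}=\beta_{j}=r_{j}+\tfrac12$ --- one half above the Dirichlet parameter $r_{j}$ --- is used. It is a standard quadratic transformation; within this paper it would suffice to verify it by matching power-series coefficients, or to substitute $z=4w/(1+w)^{2}$ and observe that the quadratic transformation of ${}_2F_1(a,b;a+b+\tfrac12;\cdot)$ has a transformed series terminating at its first term when $b=a+\tfrac12$. (For $r_{1}=\cdots=r_{k}$ the two-fold instance of the product formula is equivalent to Clausen's identity for $\bigl[{}_2F_1(a,a+\tfrac12;2a+1;z)\bigr]^{2}$, the factor ${}_3F_2(2a,2a+1,2a+\tfrac12;2a+1,4a+1;z)$ reducing to ${}_2F_1(2a,2a+\tfrac12;4a+1;z)$ after cancelling the common parameter $2a+1$.)
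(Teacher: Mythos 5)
Your proof is correct and follows exactly the route the paper indicates (the paper's own ``proof'' is a one-sentence sketch: compute the modified Stieltjes transform of $W$ and use properties of the Gauss hypergeometric function). You have supplied the missing details correctly --- the Dirichlet-moment product formula, the Euler integral giving $R_{s}(u;\tilde{b}eta(s+\tfrac12,s+\tfrac12))={}_2F_1(s,s+\tfrac12;2s+1;iu)$, and the quadratic transformation that collapses the product.
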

\begin{proof}
It is sufficient to calculate modified Stieltjes transform of the distribution of $W$ using some properties of Gauss-hypergeometric function.
\end{proof}
This property is also similar to classical stability property, but for the case of k-tuple operation.

\section{Acknowledgment}
The work was partially supported by Grant GACR 16-03708S.


\begin{thebibliography}{99}

\bibitem{RR}
Rasool Roozegar (2015)
\newblock Certain Family of Some Beta Distributions Arising from
Distribution of Randomly Weighted Average,
\newblock arXiv: 1508.01731v1, 1-9.

\bibitem{Urb}
Urbanik K. (1964)
\newblock Generalized convolutions,
\newblock Studia Math., {\bf 23}, 217-245.

\end{thebibliography}
\end{document}